\def \phi {\varphi}
\def \RN {\mathbb{R}^N}
\def \R {\mathbb{R}}
\def \G{\Gamma}
\newcommand{\Ba}{\mathscr B_\alpha}
\newcommand{\Rn}{\mathbb R^n}
\newcommand{\Rm}{\mathbb R^m}
\newcommand{\Om}{\Omega}
\newcommand{\p}{\partial}
\newcommand{\la}{\lambda}
\newcommand{\Za}{Z_\alpha}
\newcommand{\ra}{\rho_\alpha}
\newcommand{\na}{\nabla_\alpha}
\numberwithin{equation}{section}
\newcommand{\beq}{\begin{equation}}
\newcommand{\bea}[1]{\begin{array}{#1} }
\newcommand{\eeq}{ \end{equation}}
\newcommand{\ea}{ \end{array}}
\newcommand{\sul}{\Delta_H}
\newcommand{\sa}{\langle}
\newcommand{\da}{\rangle}
\newcommand{\quo}{\frac{d\sigma}{|\nabla \rho|}}
\newcommand{\qui}{\psi \frac{d\sigma}{|\nabla \rho|}}
\newtheorem{theorem}{Theorem}[section]
\newtheorem{lemma}[theorem]{Lemma}
\newtheorem{proposition}[theorem]{Proposition}
\newtheorem{remark}[theorem]{Remark}
\numberwithin{equation}{section}
\begin{document}

\title[A Rellich type estimate,  etc.]{A Rellich type estimate for a subelliptic Helmholtz equation with mixed homogeneities}

\keywords{Helmholtz equation, Sharp decay of eigenfunctions, Liouville type theorem}

\keywords{Baouendi-Grushin operators. Eigenfunctions. Rellich type estimates}
\subjclass{35P15, 35Q40, 47A75}

\date{}

\begin{abstract}
We establish an optimal asymptotic on rings for a subelliptic Helmholtz equation with mixed homogeneities. 
\end{abstract}

\author{Agnid Banerjee}
\address{Tata Institute of Fundamental Research\\
Centre For Applicable Mathematics \\ Bangalore-560065, India}\email[Agnid Banerjee]{agnidban@gmail.com}

\author{Nicola Garofalo}

\address{Dipartimento d'Ingegneria Civile e Ambientale (DICEA)\\ Universit\`a di Padova\\ Via Marzolo, 9 - 35131 Padova,  Italy}
\vskip 0.2in
\email{nicola.garofalo@unipd.it}

\thanks{A. Banerjee  is supported in part  by Department of Atomic Energy,  Government of India, under
project no.  12-R \& D-TFR-5.01-0520. N. Garofalo is supported in part by a Progetto SID (Investimento Strategico di Dipartimento): ``Aspects of nonlocal operators via fine properties of heat kernels", University of Padova (2022); and by a PRIN (Progetto di Ricerca di Rilevante Interesse Nazionale) (2022): ``Variational and analytical aspects of geometric PDEs". He has also been partially supported by a Visiting Professorship at the Arizona State University. }

\maketitle


\section{Introduction}\label{S:intro}

Understanding the spectrum of the Laplacian is a classical problem in analysis and geometry. A seminal contribution to the subject was given by F. Rellich in \cite{Rel}, where he proved the following remarkable estimate. 
Suppose that $f\not\equiv 0$ solves the Helmholtz equation 
\begin{equation}\label{helmo0}
\Delta f = - \kappa^2\ f,\ \ \ \ \ \ \ \ \ \ \ \ \kappa>0,
\end{equation}
in the complement of a cavity $\Om_0 = \{x\in \Rn\mid |x|>R_0\}$, for some $R_0>0$. Then there exist $R_1>R_0$ sufficiently large, and a constant $M = M(n,\kappa,f)>0$, such that for every $R>R_1$ one has
\begin{equation}\label{rellich}
\int_{R<|x|<2R} |f|^2 dx \ge M R.
\end{equation} 
As it is well-known, an immediate consequence of \eqref{rellich} is that there exist no solutions of \eqref{helmo0} in $L^2(\Om_0)$. Important extensions of these results to eigenfunctions of general Schr\"odinger operators were found by Kato \cite{Ka}, Agmon \cite{A} and Simon \cite{RS}. 
 
One situation which is of considerable interest and to a large extent unexplored is understanding the spectrum of differential operators on product spaces with mixed homogeneity. A basic example is the operator in $\R^N$, with $N=m+k$, 
\begin{equation}\label{pbeta}
\mathscr B_{\alpha} = \Delta_z  + \frac{|z|^{2\alpha}}4 \Delta_\sigma,
\end{equation}
where $z\in \Rm$, $\sigma\in \R^k$, and $\alpha>0$. This operator is invariant with respect to the anisotropic dilations \eqref{dilB} below. When $\alpha\to 0$, up to a renormalising factor, $\Ba$ converges to the standard Laplacian in $\RN$, but for $\alpha>0$ its ellipticity degenerates on the $k$-dimensional subspace $M = \{0\}\times \R^k\subset \R^N$. 
The Dirichlet problem for partial differential equations such as  \eqref{pbeta} was first studied by S. Baouendi in his doctoral dissertation \cite{B}. Subsequently, Grushin and Vishik  studied hypoellipticity questions in \cite{Gr1}, \cite{Gr2}, \cite{GV1}, \cite{GV2}.

When $\alpha = 1$ the operator \eqref{pbeta} is relevant in complex hyperbolic geometry as it is connected to the horizontal Laplacian $\sul$ in a group of Heisenberg type, see \cite{Fo},  \cite{Kap} and \cite{Be}. In such framework, in fact, in the logarithmic coordinates $(z,\sigma)$ with respect to a fixed orthonormal basis of the Lie algebra one has
\begin{equation}\label{slH}
\Delta_H = \Delta_z + \frac{|z|^2}{4} \Delta_\sigma  + \sum_{\ell = 1}^k \p_{\sigma_\ell} \sum_{i<j} b^\ell_{ij} (z_i \p_{z_j} -  z_j \p_{z_i}),
\end{equation}
where $b^\ell_{ij}$ indicate the group constants. When for instance $f(z,\sigma) = g(|z|,\sigma)$ is a function with cylindrical symmetry, then by the skew-symmetry of $b^\ell_{ij} $ one has $\sum_{i<j} b^\ell_{ij} (z_i \p_{z_j} -  z_j \p_{z_i}) f = 0$, and therefore
\[
\sul f = \Delta_z f+ \frac{|z|^2}4 \Delta_\sigma f,
\] 
which is precisely \eqref{pbeta} when $\alpha=1$. In a different framework, \eqref{pbeta} shows up in the theory of free boundaries, as the extension operator in the famous Caffarelli-Silvestre procedure for the fractional Laplacian $(-\Delta)^s$, $0<s<1$, see \cite[formula (1.8) on p.1247]{CaS}, \cite[Theorem 3.1 (see Remark 3.2)]{CSS}, and also \cite[Proposition 11.2]{Gft}. If, in fact, we let $\alpha = \frac{1}{2s} -1\in (-\frac 12,\infty)$, and for $u(\sigma)\in \mathscr S(\Rn)$ consider the solution $U(z,\sigma)$ to the Dirichlet problem
\begin{equation}\label{bg00}
\begin{cases}
\p_{zz} U(z,\sigma) + z^{2\alpha}  \Delta_\sigma U(z,\sigma)  = 0,\ \ \ \ (z,\sigma)\in \R^{n+1}_+ = \R_z^+\times \Rn_\sigma,
\\
U(0,\sigma) = u(\sigma),
\end{cases}
\end{equation}
then one has
\begin{equation}\label{dtnbg}
(-\Delta)^s u(\sigma) = - \frac{\G(1+s)}{\G(1-s)}\ \underset{z\to 0^+}{\lim} \p_z U(z,\sigma).
\end{equation}

Besides these connections, operators with mixed homogeneities such as $\Ba$ are of interest in several different contexts in analysis and geometry, and there is a large and continuously growing literature which would of course be impossible to list in this note. We confine ourselves to mention the works \cite{RS}, \cite{Gre}, \cite{Je}, \cite{FLto}, \cite{FL}, \cite{FL84}, \cite{FL85}, \cite{FS}, \cite{Gjde}, 
 \cite{GS}, \cite{BGG1}, \cite{BGG2}, \cite{Be}, \cite{MM},  \cite{BFIh}, \cite{BFI}, \cite{GR},  \cite{GTpotan}.

One challenging problem directly connected to the present work is understanding the nature of the eigenfunctions of \eqref{pbeta}. Because of the anisotropic behaviour of such operator, this question is largely not understood, and poses subtle new challenges with respect to the standard Laplacian $\Delta$. For the latter, we have recently observed in \cite{BG} that Rellich's $L^2$-estimate \eqref{rellich} can, in fact, be used to prove that there exist no solutions to \eqref{helmo0} such that, for some $0<p\le \frac{2n}{n-1}$, one has 
\begin{equation}\label{p}
\int_{\Om_0} |f|^p dx < \infty.
\end{equation}
We stress that this result is sharp since, for every $\kappa>0$ and $p> \frac{2n}{n-1}$, the function 
\begin{equation}\label{f}
f(x) = \int_{\mathbb S^{n-1}} e^{i \kappa \sa x,\omega\da} d\sigma(\omega) = c(n,\kappa) |x|^{- \frac{n-2}2} J_{\frac{n-2}{2}}(\kappa |x|),
\end{equation}
where $J_\nu$ indicate the Bessel function of the first kind, is a solution to \eqref{helmo0} in $\Rn$ such that $f\in L^p(\Rn)$.

This leads us to introduce our main results. Using a different approach from Rellich's, in the cited work \cite{GS}, Z. Shen and one of us established the following delicate asymptotic relation for the eigenfunctions $f$ of a class of Schr\"odinger operators modelled on \eqref{pbeta}. To state it, consider the anisotropic gauge $\rho_\alpha(z,\sigma)$ defined by \eqref{ra} below, and denote by $\Om_0 = \{(z,\sigma)\in \RN\mid \rho_\alpha(z,\sigma) > R_0\}$ the relevant cavity. Then for sufficiently large $R>R_0$ one has
\begin{equation}\label{franzo} 
\underset{R\to \infty}{\liminf}\ \int_{R<\rho_\alpha(z,\sigma)<2R} |f|^2 dz d\sigma > 0.
\end{equation}
Since the gauge rings $\{R<\rho_\alpha(z,\sigma)<2R\}$ fill up the whole space $\RN$, it is clear that the estimate \eqref{franzo} does imply that $f\not\in L^2(\Om_0)$. In particular, such result allows to recover in the limit as $\alpha\to 0$ the cited ones by Kato-Agmon-Simon. However, \eqref{franzo} is quantitatively weaker than Rellich's estimate \eqref{rellich} above, and it falls short of providing information about eigenfunctions below the $L^2$ threshold.   

\medskip

The aim of this paper is to establish the following nontrivial improvement of the asymptotic relation \eqref{franzo}. In its statement, we indicate with $Q_\alpha$ the homogeneous dimension associated with the anisotropic dilations \eqref{dilB} of $\Ba$, see \eqref{Qa} below. 

\begin{theorem}\label{sharprel}
Let $Q_\alpha\ge 3$ and let $f\not\equiv 0$ be a solution to 
\begin{equation}\label{kappa}
\Ba f = - \kappa^2\ f\ \ \ \ \ \ \ \text{in}\ \ \Om_0,
\end{equation}
for some $\kappa>0$. There exists $R_1>R_0$ sufficiently large, and a  constant $M = M(m,k,f)>0$ such that for $R>R_1$ one has
\begin{equation}\label{franzosharp}
\int_{R<\rho_\alpha(x)<2R} |f|^2 dx \ge M R.
\end{equation} 
\end{theorem}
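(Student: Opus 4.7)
The approach is to develop a Rellich--Pohozaev type identity adapted to the anisotropic geometry of $\Ba$ and combine it with the already known asymptotic lower bound \eqref{franzo} from \cite{GS}. The natural multiplier comes from the infinitesimal generator of the dilations \eqref{dilB},
$$\Za \;=\; z\cdot\nabla_z + (1+\alpha)\,\sigma\cdot\nabla_\sigma,$$
which satisfies $\Za \ra = \ra$ and the commutation relation $[\Ba,\Za] = 2\Ba$. This last identity is the subelliptic counterpart of the scaling property of $\Delta$ that underlies the classical proof of \eqref{rellich}.

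The first step is to derive the identity: multiply $\Ba f + \kappa^2 f = 0$ by $\Za \bar f + c_\alpha \bar f$, with $c_\alpha$ a real constant chosen so that the first-order cross terms produced by the first integration by parts cancel (one expects $c_\alpha = (Q_\alpha - 2)/2$), take real parts, and integrate over the gauge annulus $\{R_0 < \ra < R\}$. Iterating the integration by parts and exploiting the commutation with $\Za$ collapses the bulk into a clean expression: the interior volume term $\kappa^2 \int_{\{R_0<\ra<R\}}|f|^2\,dx$, plus boundary flux integrals on $\{\ra = R\}$ and $\{\ra = R_0\}$ that are quadratic forms in $f$, $\Za f$, and the anisotropic gradient $\na f$, weighted by the natural coarea density $|\na \ra|^{-1}$ for the gauge.

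The second step is the analytic core. The estimate \eqref{franzo}, summed over dyadic gauge shells, already guarantees that the interior integral diverges as $R\to\infty$, so the Pohozaev identity forces the outer boundary flux on $\{\ra = R\}$ to grow accordingly. A Caccioppoli inequality on dyadic gauge shells, obtained by testing $\Ba f = -\kappa^2 f$ against $|f|^2$ with a suitable cutoff, dominates the $\na f$ contributions to the flux by the $L^2$ norm of $f$ on a slightly enlarged shell. Rearranging, one concludes that the sphere integral $R \mapsto \int_{\{\ra = R\}} |f|^2\,d\sigma^\psi_R$, with $d\sigma^\psi_R$ the weighted surface measure induced by the coarea formula for $\ra$, stays bounded below by a positive constant for all sufficiently large $R$. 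Integrating this pointwise-in-$R$ lower bound over $R\in[R,2R]$, again via the coarea formula for $\ra$, produces \eqref{franzosharp}.

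The main obstacle is the degeneracy of $\Ba$ on the submanifold $M = \{0\}\times \R^k$: there the gauge $\ra$ fails to be $C^1$ and the surface measure $d\sigma^\psi_R$ becomes singular, so the integrations by parts are formal. They must be justified by a limiting procedure employing cutoffs that vanish in a shrinking neighbourhood of $M$; the error terms are then controlled by the subelliptic regularity of $\Ba$-solutions together with weighted Hardy inequalities adapted to the gauge $\ra$. A secondary technical point is the extraction of the correct constant $c_\alpha$ so that the resulting identity has the sign needed to convert the growth of the interior integral into a lower bound, rather than an upper bound, for the boundary flux.
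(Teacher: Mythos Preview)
Your proposal has a genuine gap at the quantitative heart of the argument: the multiplier $\Za f + c_\alpha f$ with $c_\alpha=(Q_\alpha-2)/2$ produces a Pohozaev identity whose monotone boundary functional carries one factor of $\ra$ too many. Concretely, with that choice the identity reads (up to lower-order terms)
\[
\kappa^2\int_{r<\ra<R} |f|^2\,dx \;=\; B(R)-B(r),
\qquad
B(t)\;\lesssim\; t\int_{S_t}\Big[\Big(\tfrac{\Za f}{\ra}\Big)^2\psi_\alpha + \kappa^2 f^2\Big]\frac{d\sigma}{|\nabla\ra|}.
\]
Feeding in \eqref{franzo} gives only $B(R)\gtrsim \log R$ (dyadic summation of a $\liminf$ bound yields logarithmic growth), hence the sphere density on the right is merely $\gtrsim (\log R)/R$. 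Integrating this over $[R,2R]$ and invoking Caccioppoli returns $\int_{R<\ra<2R}|f|^2\gtrsim \log R$, not $\gtrsim R$. Iterating the bootstrap gains further logarithms but never reaches linear growth. Equivalently, using the monotonicity of $B$ directly (without \eqref{franzo}) gives $\int_{S_t}[\cdots]\ge c/t$, which again only reproduces \eqref{franzo}. Your claim that the sphere integral of $|f|^2$ stays bounded below by a \emph{constant} does not follow from the ingredients you list.

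The paper closes this gap by a different choice of weight in the Rellich identity \eqref{gud}: taking $\ell=(Q_\alpha-1)/2$ (i.e.\ working with $w=\ra^{(Q_\alpha-1)/2}f$) together with the power $s=-Q_\alpha$ makes the bulk nonnegative by Lemma~\ref{L:tang} when $Q_\alpha\ge3$, and the resulting monotone quantity is $G(w,t)/t^{Q_\alpha-1}$ of \eqref{Gud}, which is \emph{directly} comparable to the unweighted sphere density of $f$. This removes the spurious factor of $t$. A second, separate difficulty---not addressed in your sketch---is to show that $G(w,t)>0$ for some large $t$; this is nontrivial and is handled (Lemma~\ref{L:gsimpr}) by a contradiction argument that combines the negation of \eqref{franzosharp} with the monotonicity of the older functional $F(\ell_0,\cdot)$ from \cite{GS} for a large $\ell_0$. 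Finally, the integrations by parts you worry about near $M=\{0\}\times\R^k$ are already justified in \cite{GS}, so that is not where the obstruction lies.
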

From Theorem \ref{sharprel}, by arguing as in the proof of \cite[Theorem 1.1]{BG}, we obtain the following Liouville type theorem which represents a sharpening of the above mentioned $L^2$ result in \cite{GS}.  

\begin{theorem}\label{main1}
Let $Q_\alpha\ge 3$ and let $f$ be a solution to \eqref{kappa} in $\Om_0$ for some $\kappa>0$. If for some $0< p \leq \frac{2Q_\alpha}{Q_\alpha-1}$ the function $f$ satisfies
\begin{equation}\label{lp}
\int_{\Om_0} |f|^p dx < \infty,
\end{equation}
then $f \equiv 0$ in $\Om_0$.
\end{theorem}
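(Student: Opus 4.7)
The plan is to argue by contradiction using Theorem \ref{sharprel}. Assume $f\not\equiv 0$ satisfies both \eqref{kappa} and \eqref{lp} for some $0<p\le \frac{2Q_\alpha}{Q_\alpha-1}$. Theorem \ref{sharprel} then supplies $R_1,M>0$ with $\int_{R<\rho_\alpha<2R}|f|^2\,dx\ge MR$ for all $R>R_1$, so the task is to produce a matching upper bound that is $o(R)$ as $R\to\infty$. Two facts will be used throughout: the Jacobian of the anisotropic dilation $\delta_\lambda(z,\sigma)=(\lambda z,\lambda^{\alpha+1}\sigma)$ equals $\lambda^{Q_\alpha}$, hence $|\{R<\rho_\alpha<2R\}|=c_0 R^{Q_\alpha}$; and \eqref{lp} together with dominated convergence yields the tail decay $\varepsilon(R):=\int_{R<\rho_\alpha<2R}|f|^p\,dx\to 0$ as $R\to\infty$.

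\emph{Case $2\le p\le \frac{2Q_\alpha}{Q_\alpha-1}$.} H\"older's inequality with exponents $p/2$ and $p/(p-2)$ (interpreted trivially when $p=2$) gives
\begin{equation*}
\int_{R<\rho_\alpha<2R}|f|^2\,dx\le \varepsilon(R)^{2/p}\,(c_0 R^{Q_\alpha})^{1-2/p}.
\end{equation*}
An elementary manipulation shows that $p\le \frac{2Q_\alpha}{Q_\alpha-1}$ is equivalent to $Q_\alpha(1-2/p)\le 1$, so the right-hand side is bounded above by $c\,\varepsilon(R)^{2/p}R=o(R)$, contradicting the Rellich lower bound.

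\emph{Case $0<p<2$.} Since $\Ba$ is hypoelliptic with smooth coefficients, $f\in C^\infty(\Om_0)$, and standard subelliptic Moser iteration for equations modelled on $\Ba$ yields a scale-uniform interior bound of the form $\|f\|_{L^\infty(B)}\le C\|f\|_{L^p(2B)}$ on intrinsic balls of bounded radius contained in $\Om_0$, with $C$ independent of the center. Covering $\{R<\rho_\alpha<2R\}$ by a bounded-overlap family of such balls contained in a slightly enlarged concentric ring, one deduces $\|f\|_{L^\infty(R<\rho_\alpha<2R)}\to 0$ as $R\to\infty$. Writing $|f|^2=|f|^{2-p}|f|^p$ and integrating,
\begin{equation*}
\int_{R<\rho_\alpha<2R}|f|^2\,dx\le \|f\|_{L^\infty(R<\rho_\alpha<2R)}^{2-p}\,\varepsilon(R)\longrightarrow 0,
\end{equation*}
once more contradicting $\ge MR$.

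The substantive input is Theorem \ref{sharprel} itself; granted this, the only delicate point is the scale-uniform subelliptic $L^\infty$--$L^p$ estimate invoked for $p<2$, where intrinsic balls must be handled with some care near the singular locus $M=\{0\}\times\R^k$ crossed by the gauge rings. This ingredient, however, is classical for Baouendi--Grushin type operators and is available in the literature cited in the paper, so no essentially new input is required.
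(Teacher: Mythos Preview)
Your proof is correct and follows essentially the same route as the paper: contradiction via Theorem \ref{sharprel}, H\"older's inequality for $2\le p\le \tfrac{2Q_\alpha}{Q_\alpha-1}$, and the splitting $|f|^2\le \|f\|_{L^\infty}^{2-p}|f|^p$ together with a uniform $L^\infty$ bound for $0<p<2$. The only difference is in how the $L^\infty$ bound is obtained: the paper proves it as the separate Proposition \ref{ms} via an explicit two-case argument (lifting to a homogeneous equation near the singular set, and rescaling to a uniformly elliptic problem away from it) and uses merely that $\|f\|_{L^\infty(\Om_1)}<\infty$, whereas you invoke subelliptic Moser iteration and claim the stronger (but unneeded) decay $\|f\|_{L^\infty(R<\rho_\alpha<2R)}\to 0$.
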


\begin{remark}\label{R:Q} 
In our proof of Theorems \ref{sharprel} and \ref{main1} the limitation $Q_\alpha \ge 3$ seems to play an essential role. Since by \eqref{Qa} below we have $Q_\alpha = m + (\alpha+1)k$, it is clear that when $m\ge 2$, it is always true that $Q_\alpha\ge 3$, whatever $k\ge 1$ and $\alpha>0$ are. If instead $m=k=1$, in order to guarantee that $Q_\alpha\ge 3$, our results will be confined to the range $\alpha\ge 1$ in \eqref{pbeta}.
\end{remark}  

The organisation of the present paper is as follows. In Section \ref{S:prelim} we collect some known facts about the operator \eqref{pbeta}. Section \ref{S:ri} is devoted to highlighting some of the essential steps in the paper \cite{GS}. We begin by stating the basic Rellich type identity \eqref{gud}. Such identity led the authors of \cite{GS} to the introduction of the functional $F(\ell,t)$, which we recall in \eqref{Fell}. The monotonicity in Lemma \ref{L:Fmono} below is one of the main results about $F(\ell,t)$. Finally, in Section \ref{S:main} we prove Theorems \ref{sharprel} and \ref{main1}. One of the key novelty of the present work is the new functional $G(w,t)$ in \eqref{Gud} below, and its monotonicity established in Lemma \ref{L:ag}. Such result leads to the growth property in Lemma \ref{L:gsimpr2}, which cascades in the improved Lemma \ref{L:gsimpr3}. With this final tool in hands, the proof of Theorem \ref{sharprel} follows along the lines of that of \eqref{franzo} in \cite{GS}.

\medskip

\noindent \textbf{Acknowledgment:} We thank Matania Ben-Artzi, Eugenia Malinnikova, Yehuda Pinchover and Sundaram Thangavelu for insightful conversations about the subject of this paper.

\vskip 0.2in


\section{Preliminaries}\label{S:prelim}

In this section we collect some known facts from \cite{Gjde} and \cite{GS}. The reader should also see the paper \cite{GR}. We equip $\R^N$ with the following non-isotropic dilations
\begin{align}\label{dilB}
 \delta_\lambda(z,\sigma)&=(\lambda z,\lambda^{\alpha+1}\sigma),\ \ \lambda>0.
\end{align}
A function $u$ is $\delta_\la$-homogeneous (or simply, homogeneous) of degree $\kappa$ if 
\[
u(\delta_\la(z,\sigma)) = \la^\kappa u(z,\sigma),\ \ \ \ \ \ \ \ \ \ \ \la>0.
\]
In the study of the operators \eqref{pbeta} the following pseudo-gauge on $\R^{N}$ introduced in \cite{Gjde} plays a pervasive role:
\begin{align}\label{ra}
 \rho_\alpha(z,\sigma) & = \left(|z|^{2(\alpha+1)}+4(\alpha+1)^2|\sigma|^2\right)^{\frac1{2(\alpha+1)}}.
\end{align}
It is clear from \eqref{dilB} and \eqref{ra} that $\rho_\alpha$ is homogeneous of degree one, i.e., $\rho_\alpha(\delta_\la(x)) = \la \rho_\alpha(x)$, for every $x = (z,\sigma)\in \RN$. The infinitesimal generator of the dilations \eqref{dilB} is given by
\begin{equation}\label{Za}
 Z_\alpha=\sum_{i=1}^m z_i\partial_{z_i} + (\alpha+1)\sum_{j=1}^k \sigma_j\partial_{\sigma_j}.
\end{equation}
It is easy to verify that $u$ is homogeneous of degree $\kappa$ if and only if
\begin{equation}\label{Zak}
\Za u = \kappa u.
\end{equation}
We note that 
\begin{align*}
 d(\delta_\lambda(z,t))&=\lambda^{m+(\alpha+1)k}\ dz\ dt,
\end{align*}
which motivates the definition of the \emph{homogeneous dimension} for the number
\begin{align}\label{Qa}
Q_\alpha=m+(\alpha+1)k.
\end{align}

We emphasise that \eqref{ra} is not a true gauge, because there is no underlying group structure associated to $\Ba$. We will respectively denote by
\begin{align}\label{BSr}
 B_r&=\{(z,\sigma)\in \R^N \mid \rho_\alpha(z,\sigma)<r\},\ \ \ \ \ S_r = \p B_r,
\end{align}
the ball and sphere centred at the origin with radius $r>0$. It is straightforward to verify that $\Ba$ is $\delta_\la$-homogeneous of degree two, i.e., 
\begin{equation}\label{deg2}
 \Ba(\delta_\lambda \circ u)=\lambda^2\delta_\lambda\circ (\Ba u).
\end{equation}

Functions which satisfy $\Ba u=0$ may not be necessarily smooth in the Euclidean sense. For instance, if $A = \frac{(\alpha+1)(2\alpha+m)}{k}$, then the function $P_\kappa(z,\sigma) =  |z|^{2(\alpha + 1)} - A |\sigma|^2,$ is a solution of $\Ba f = 0$, homogeneous of degree $\kappa = 2(\alpha+1)$.  It is thus necessary to introduce the following classes of ``smooth  functions''. We indicate with $x = (z,\sigma)\in \RN$ the generic point in $\RN$, and consider the vector fields $X_1,...,X_N$ defined by
\begin{align}\label{df}
& X_i(x)= \partial_{z_i},\ \ \  i=1, ...m,\ \ \ \  \ \ \ \ X_{m+j}(x)= \frac{|z|^{\alpha}}2 \partial_{\sigma_j},\ \ \   j=1, ...k,\ \ \alpha>0.
\end{align}
For an open set $\Om\subset \RN$ we define
\begin{align*}
 \Gamma^1(\Omega)=\{f\in C(\Omega)\mid f,X_jf\in C(\Omega)\},
\end{align*}
where the derivatives are meant in the distributional sense. We also set
\[
\Gamma^2(\Omega)=\{f\in C(\Omega)\mid f,X_jf\in \Gamma^1(\Omega)\}.
\] 
Thus, solutions to $\Ba u=0$ in $\Omega$ are taken to be of class $\Gamma^2(\Omega)$.

In \cite{Gjde}, the second named author proved that, with an explicitly given constant $C_\alpha>0$, 
 the function 
\begin{equation}\label{Ga}
 \Gamma_\alpha(z,\sigma)=\frac{C_{\alpha}}{\ra(z,\sigma)^{Q_{\alpha}-2}}
\end{equation}
is a fundamental solution for $-\Ba$ with singularity at $(0,0)$. Since, as we have observed above, the operator is invariant with respect to translations along $M = \{0\}\times \R^k$, one easily recognises that  \eqref{Ga} provides a fundamental solution at every point of this subspace of $\R^N$.
For $u, v\in \Gamma^1(\R^{N})$, we define the $\alpha$-gradient of $u$ to be
\begin{equation*}
 \nabla_\alpha u=\nabla_z u+\frac{|z|^\alpha}{2}\nabla_\sigma u,
\end{equation*}
and we set
\[
\sa\na u,\na v\da = \sa\nabla_z u,\nabla_z v\da + \frac{|z|^{2\alpha}}{4} \sa\nabla_\sigma u,\nabla_\sigma v\da.
\]
The square of the length of $\nabla_\alpha u$ is
\begin{equation*}
 |\nabla_\alpha u|^2=|\nabla_z u|^2+\frac{|z|^{2\alpha}}{4}|\nabla_\sigma u|^2.
\end{equation*}
The following lemma, collects the identities (2.12)-(2.14) in \cite{Gjde}.

\begin{lemma}\label{l:gradalpharho}
 Given a function $u$ one has in $\R^N\setminus\{0\}$,
 \begin{equation}\label{nara}
  \psi_{\alpha} \overset{def}{=} |\na \ra|^2 = \frac{|z|^{2\alpha}}{\ra^{2\alpha}},
 \end{equation}
and
 \begin{equation}\label{nara2}
 \sa\na u,\na \ra\da = \frac{\Za u}{\ra} \psi_\alpha.
 \end{equation}
 \end{lemma}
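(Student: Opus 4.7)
The plan is to verify both identities by a direct coordinate computation, starting from the defining formula \eqref{ra}. First I would rewrite \eqref{ra} in the algebraically cleaner form
\[
\rho_\alpha^{2(\alpha+1)} = |z|^{2(\alpha+1)} + 4(\alpha+1)^2 |\sigma|^2,
\]
and then implicitly differentiate in $z_i$ and in $\sigma_j$. After dividing through by $2(\alpha+1)\rho_\alpha^{2\alpha+1}$ one reads off
\[
\partial_{z_i}\rho_\alpha \;=\; \frac{|z|^{2\alpha}\,z_i}{\rho_\alpha^{2\alpha+1}}, \qquad
\partial_{\sigma_j}\rho_\alpha \;=\; \frac{4(\alpha+1)\,\sigma_j}{\rho_\alpha^{2\alpha+1}},
\]
so that $\nabla_z\rho_\alpha = |z|^{2\alpha} z/\rho_\alpha^{2\alpha+1}$ and $\nabla_\sigma\rho_\alpha = 4(\alpha+1)\sigma/\rho_\alpha^{2\alpha+1}$. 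All subsequent work is algebraic manipulation.

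For the first identity \eqref{nara}, I would plug these expressions into $|\nabla_\alpha\rho_\alpha|^2 = |\nabla_z\rho_\alpha|^2 + \tfrac{|z|^{2\alpha}}{4}|\nabla_\sigma\rho_\alpha|^2$ and collect terms over the common denominator $\rho_\alpha^{4\alpha+2}$:
\[
|\nabla_\alpha\rho_\alpha|^2 \;=\; \frac{|z|^{2\alpha}}{\rho_\alpha^{4\alpha+2}}\Bigl(|z|^{2\alpha+2} + 4(\alpha+1)^2 |\sigma|^2\Bigr).
\]
The key observation is that the bracketed quantity is exactly $\rho_\alpha^{2(\alpha+1)}$, so the expression simplifies to $|z|^{2\alpha}/\rho_\alpha^{2\alpha}$, which is \eqref{nara}.

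For the second identity \eqref{nara2}, I would expand
\[
\langle \nabla_\alpha u,\nabla_\alpha\rho_\alpha\rangle \;=\; \sum_{i=1}^m \partial_{z_i}u\cdot \partial_{z_i}\rho_\alpha \;+\; \frac{|z|^{2\alpha}}{4}\sum_{j=1}^k \partial_{\sigma_j}u\cdot \partial_{\sigma_j}\rho_\alpha,
\]
substitute the partial derivatives of $\rho_\alpha$ computed above, and factor out $|z|^{2\alpha}/\rho_\alpha^{2\alpha+1}$. The remaining bracketed sum is
\[
\sum_{i=1}^m z_i\partial_{z_i}u + (\alpha+1)\sum_{j=1}^k \sigma_j\partial_{\sigma_j}u \;=\; Z_\alpha u,
\]
by the definition \eqref{Za}. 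Splitting the prefactor as $\bigl(|z|^{2\alpha}/\rho_\alpha^{2\alpha}\bigr)\cdot(1/\rho_\alpha)$ and using \eqref{nara} gives $\psi_\alpha Z_\alpha u/\rho_\alpha$, which is \eqref{nara2}.

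There is no real obstacle here; the statement is essentially a bookkeeping exercise in the chain rule. The only points requiring attention are tracking the exponents correctly and recognizing at the decisive moment that the combination $|z|^{2\alpha+2} + 4(\alpha+1)^2|\sigma|^2$ coincides with $\rho_\alpha^{2(\alpha+1)}$, which is what makes both identities collapse to their clean final forms.
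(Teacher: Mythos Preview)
Your computation is correct in every detail. Note that the paper does not supply its own proof of this lemma; it merely cites the identities (2.12)--(2.14) of \cite{Gjde}. Your direct chain-rule verification is exactly the argument one finds there, so there is nothing to compare.
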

 The identity \eqref{nara2} is particularly important. We will repeatedly use the following basic consequence of it.

\begin{lemma}\label{L:tang}
For a given function $u$ we have
\[
|\nabla_\alpha u|^2 \ge \left(\frac{Z_\alpha u}{\rho_\alpha}\right)^2 \psi_\alpha.
\]
\end{lemma}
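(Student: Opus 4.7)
The plan is to deduce the inequality as an immediate application of the Cauchy--Schwarz inequality, combined with the two identities provided in Lemma \ref{l:gradalpharho}. Viewed componentwise, the $\alpha$-gradient can be identified with the Euclidean vector $\bigl(\nabla_z u,\tfrac{|z|^\alpha}{2}\nabla_\sigma u\bigr) \in \R^N$, so that the bilinear form $\langle \nabla_\alpha u,\nabla_\alpha v\rangle = \langle\nabla_z u,\nabla_z v\rangle + \tfrac{|z|^{2\alpha}}{4}\langle\nabla_\sigma u,\nabla_\sigma v\rangle$ is nothing but the ordinary Euclidean inner product of these vectors, and $|\nabla_\alpha u|^2$ is their squared Euclidean norm. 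Thus standard Cauchy--Schwarz gives
\begin{equation*}
|\langle \nabla_\alpha u,\nabla_\alpha \rho_\alpha\rangle|^2 \;\le\; |\nabla_\alpha u|^2 \cdot |\nabla_\alpha \rho_\alpha|^2.
\end{equation*}

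I would then substitute the two identities from Lemma \ref{l:gradalpharho}. By \eqref{nara2} the left-hand side equals $\bigl(\tfrac{Z_\alpha u}{\rho_\alpha}\bigr)^2 \psi_\alpha^2$, whereas by \eqref{nara} we have $|\nabla_\alpha \rho_\alpha|^2 = \psi_\alpha$. Therefore on the open set where $|z|>0$, and consequently $\psi_\alpha>0$, one can cancel one factor of $\psi_\alpha$ to obtain exactly
\begin{equation*}
\left(\frac{Z_\alpha u}{\rho_\alpha}\right)^2 \psi_\alpha \;\le\; |\nabla_\alpha u|^2.
\end{equation*}
On the degenerate set $M=\{0\}\times\R^k$ the inequality is trivial: by \eqref{nara} we have $\psi_\alpha = |z|^{2\alpha}/\rho_\alpha^{2\alpha} = 0$ there, so the right-hand side of the claimed estimate vanishes and the inequality reduces to the nonnegativity of $|\nabla_\alpha u|^2$.

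There is essentially no obstacle in this argument; the only subtlety is a bookkeeping one, namely that the pairing $\langle\cdot,\cdot\rangle$ degenerates precisely where $\psi_\alpha$ does, and this is exactly the locus on which the conclusion is trivially satisfied. Everywhere else the statement is a one-line consequence of Cauchy--Schwarz and Lemma \ref{l:gradalpharho}.
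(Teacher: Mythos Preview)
Your proof is correct and is essentially the same as the paper's: the paper phrases Cauchy--Schwarz geometrically by projecting $\nabla_\alpha u$ onto the unit vector $\nu_\alpha = \nabla_\alpha\rho_\alpha/|\nabla_\alpha\rho_\alpha|$ and observing that the tangential part has nonnegative square, but this is exactly the inequality $\langle\nabla_\alpha u,\nabla_\alpha\rho_\alpha\rangle^2\le |\nabla_\alpha u|^2|\nabla_\alpha\rho_\alpha|^2$ that you invoke directly. Both arguments then substitute \eqref{nara} and \eqref{nara2} and handle the degenerate set $\{\psi_\alpha=0\}$ separately.
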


\begin{proof}
If $x \in M$ is a point at which $|\nabla_\alpha \rho_\alpha|^2 = \psi_\alpha = 0$ (see \eqref{nara}), there is nothing to prove. We can thus assume $\psi_\alpha(x) \not= 0$. Consider the unit vector at $x$
\[
\nu_\alpha = \frac{\nabla_\alpha \rho_\alpha}{|\nabla_\alpha \rho_\alpha|},
\]
and define 
\[
\delta_\alpha u = \nabla_\alpha u - \sa\nabla_\alpha u,\nu_\alpha\da \nu_\alpha.
\]
We have
\[
0\le |\delta_\alpha u|^2 = |\nabla_\alpha u|^2 - \sa\nabla_\alpha u,\nu_\alpha\da^2.
\]
The desired conclusion now follows by observing that \eqref{nara2} gives
\[
\sa\nabla_\alpha u,\nu_\alpha\da^2 = \left(\frac{Z_\alpha u}{\rho_\alpha}\right)^2 \psi_\alpha.
\]

\end{proof}

We close this section with an auxiliary result that will be needed in the proof of Theorem \ref{main1}. In what follows, for some fixed $R_1>R_0$, we let $\Om_1 = \{x\in \RN\mid \rho_\alpha(x)>R_1\}$.

\begin{proposition}\label{ms}
Let $f$ be a solution to \eqref{helmo0} in $\Omega_0=\{x\in \RN\mid \rho_\alpha(x)>R_0\}$ and suppose that $f$ satisfy \eqref{lp} for some $p>0$. Then $f \in L^{\infty}(\Omega_1)$, and there exists $C = C(m,k, \alpha,  p, R_0, R_1)>0$ such that
\begin{equation}\label{infty}
||f||_{L^\infty(\Om_1)} \le C \left(\int_{\Om_0} |f|^p dx\right)^{\frac 1p}.
\end{equation}
\end{proposition}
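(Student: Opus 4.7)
My approach is to derive \eqref{infty} from a local subelliptic $L^\infty$--$L^p$ mean value inequality combined with a covering of $\Om_1$ by gauge balls whose enlargements lie in $\Om_0$. The starting observation is that, since $\Ba f = -\kappa^2 f$, the square $u = f^2$ satisfies in $\Om_0$
\begin{equation*}
\Ba u \;=\; 2 f \Ba f + 2 |\na f|^2 \;=\; -2\kappa^2 u + 2 |\na f|^2 \;\ge\; -2\kappa^2 u,
\end{equation*}
so that $u \ge 0$ is a weak subsolution of $\Ba u + 2\kappa^2 u = 0$ on $\Om_0$. (The same identity holds for $u = f \bar f$ when $f$ is complex-valued.)

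First I would establish an $L^\infty$--$L^2$ local mean value estimate via subelliptic Moser iteration: for some $r_0 = r_0(R_0, R_1) > 0$ small enough that $B_{r_0}(x_0) \subset \Om_0$ for every $x_0 \in \Om_1$, one has
\begin{equation*}
\|f\|_{L^\infty(B_{r_0/2}(x_0))}^2 \;\le\; C_0 \int_{B_{r_0}(x_0)} |f|^2 \, dx,
\end{equation*}
with $C_0 = C_0(m,k,\alpha,\kappa, r_0)$ independent of $x_0$. This follows by testing the subsolution inequality above against powers of $u$ cut off by suitable test functions, combining the resulting Caccioppoli estimate with the Sobolev embedding for the Baouendi--Grushin gradient $\na$ (cf. \cite{FL}, \cite{Gjde}), and iterating in the standard Moser fashion. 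Uniformity of $C_0$ in $x_0$ is guaranteed by the invariance of $\Ba$ under $\sigma$-translations and by the anisotropic dilations $\delta_\lambda$ in \eqref{dilB}, which allow normalization of every ball $B_{r_0}(x_0)$ with $x_0 \in \Om_1$ to a bounded family of reference balls (at the expense of rescaling $\kappa$, which stays bounded since $r_0$ is fixed).

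To upgrade to an $L^p$-estimate for arbitrary $p > 0$, I would apply the Bombieri--Giusti/De Giorgi interpolation: run the $L^\infty$--$L^2$ bound on a nested sequence of gauge balls $B_{r_j}(x_0)$ with $r_j = r_0(\tfrac12 + 2^{-j-1})$, and exploit the elementary interpolation
\begin{equation*}
\int |f|^2 \, dx \;\le\; \|f\|_{L^\infty(B_{r_{j-1}}(x_0))}^{2-p} \int |f|^p \, dx
\end{equation*}
together with Young's inequality to absorb the supremum factor on the left. A geometric summation of the iterates then yields
\begin{equation*}
\|f\|_{L^\infty(B_{r_0/2}(x_0))} \;\le\; C_1 \Big(\int_{B_{r_0}(x_0)} |f|^p \, dx\Big)^{1/p},
\end{equation*}
and covering $\Om_1$ by such balls and taking the supremum gives \eqref{infty}.

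\textbf{Main obstacle.} The principal technical point is securing the uniformity of the Moser constant across the cover of $\Om_1$: since $\Ba$ degenerates on the characteristic subspace $M = \{0\} \times \R^k$ and is not translation invariant in $z$, the Caccioppoli and $\na$-Sobolev estimates on a ball must be controlled uniformly when the center is near $M$. This is handled by combining the dilation relation \eqref{deg2} with the $\sigma$-translation invariance to reduce to a bounded family of reference balls, for which the subelliptic regularity constants are uniform. A secondary subtlety is the regime $0 < p < 2$, which forces the use of the Bombieri--Giusti absorption trick described above rather than a direct Moser estimate.
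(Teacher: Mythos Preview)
Your approach is correct but follows a genuinely different route from the paper's. The paper does \emph{not} run a single subelliptic Moser iteration. Instead it splits on the location of the base point $(z_0,\sigma_0)\in\Om_1$: in the regime $|z_0|\ge 2R_0$ it performs the anisotropic $\sigma$-rescaling $\tilde f(z,\sigma)=f(z,\sigma_0+|z_0|^\alpha\sigma)$, which turns $\Ba$ into a \emph{uniformly elliptic} operator on a fixed cube and reduces the estimate to the classical Moser bound of \cite{HL}; in the regime $|z_0|<2R_0$ it lifts $f$ to $g(z,\sigma,y)=e^{y}f(z,\sigma)$, which solves a \emph{homogeneous} Grushin-type equation in $\R^{N+1}$, and then quotes Wang's subelliptic H\"older estimates \cite{W} on a fixed cylinder. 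The passage from $L^2$ to $L^p$ with $0<p<2$ is done by the same interpolation trick you describe (referenced to \cite{HL}).

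What each approach buys: yours is conceptually cleaner once one grants the uniform Franchi--Lanconelli Sobolev/doubling machinery for $\na$, and it treats all base points at once; the paper's is more elementary and self-contained, avoiding any appeal to global subelliptic Sobolev constants by reducing everything either to a uniformly elliptic problem or to a single quoted estimate on a fixed reference set.

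One caution about your uniformity argument: the specific mechanism you invoke---normalising $B_{r_0}(x_0)$ to a ``bounded family of reference balls'' via $\sigma$-translations and the dilations $\delta_\lambda$---does not quite work as stated, because $\Ba$ is not $z$-translation invariant and $\delta_\lambda$ is centred only at the origin (so it cannot move a general $(z_0,0)$ to a fixed reference point without sending the radius and the rescaled $\kappa$ to $0$ or $\infty$). The uniformity you need is genuinely a consequence of the global doubling and Sobolev constants for the Grushin metric balls from \cite{FL}, \cite{FS}, not of any group-like normalisation; since you already cite \cite{FL} for this, the gap is presentational rather than mathematical. The paper's explicit case split $|z_0|\lessgtr 2R_0$ is, in effect, an ad hoc replacement for this uniform subelliptic theory.
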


\begin{proof}
Given a point $(z_0, \sigma_0) \in \Om_1$, there are two possibilities: (i) $|z_0| < 2R_0$; (ii) $|z_0| \geq 2R_0$.
In the case (i), the estimate
\begin{equation}\label{est1}
|f(z_0, \sigma_0)| \leq C \left(\int_{\Om_0} |f|^p dx\right)^{1/p},
\end{equation}
is seen as follows.  First, for $y \in \mathbb R$, we define $g(z, \sigma, y)= e^{y} f(z, \sigma)$. It is clear that the function $g$ solves the following equation in $\Om_0\times \R_y\subset \R^{N+1}$,
\begin{equation}\label{g}
L  g \overset{def}{=} \Delta_z g +\p_{yy} g + \frac{|z|^{2\alpha}}{4}\Delta_{\sigma} g =0.
\end{equation}
Then by applying the arguments in the proof of Lemmas 1 \& 2, on p. 230-231 in \cite{W}, we obtain the following $L^2$ estimate when $|z_0| < 2R_0$
\begin{equation}\label{es2}
|f(z_0, \sigma_0)| \leq C(m, k, R_0, R_1) \left(\int_{C_r(z_0,\sigma_0)} |f|^2 dx\right)^{1/2},
\end{equation}
where we have let $C_r(z_0,\sigma_0)=\{(z,\sigma)\in \RN \mid |z-z_0|< r, |\sigma-\sigma_0| < r^{\alpha+1} \}$. Here, $r>0$ is sufficiently small, depending on $R_0, R_1$, so that $\overline C_r(z_0,\sigma_0) \subset \Omega_0$. 
The desired estimate \eqref{est1} follows  by H\"older inequality from \eqref{es2} when $p>2$. When $p \in (0,2)$ we can adapt the standard interpolation argument in \cite[Chapter 4]{HL} to reach the desired conclusion starting from \eqref{es2}.

In case (ii), when $|z_0| \geq 2R_0$, we consider the following rescaled function
\begin{equation}\label{tildef}
\tilde f(z,\sigma) = f(z, \sigma_0 + |z_0|^{\alpha} \sigma).
\end{equation}
Such $\tilde f$ solves the equation
\begin{equation}\label{res1}
\tilde L \tilde f \overset{def}= \Delta_z \tilde f + \frac{|z|^{2\alpha}}{4 |z_0|^{2\alpha}} \Delta_\sigma \tilde f = - \tilde f.
\end{equation}
One can easily check that, in the region $A= \{(z,\sigma) \mid |z-z_0| < R_0, |\sigma|< 1/2\}$, $\tilde L$ is uniformly elliptic, with ellipticity bounds which are universal ($2^{-2\alpha-2}$ from below, and $\max \left(1, \frac{3^{2\alpha}}{2^{2\alpha+2}} \right)$ from above will do). Moreover, since $|z_0| \geq 2R_0$, it follows from the  triangle inequality that $|z|> R_0$ for all $(z, \sigma) \in A$. Therefore, we have that for $(z, \sigma) \in A$,  $(z, \sigma_0 + |z_0|^{\alpha} \sigma) \in \Omega_0$.  By applying the Moser type estimate to $\tilde f$, see \cite[Theorem 4.1]{HL}, we find
\begin{equation}\label{tr1}
|f(z_0, \sigma_0)| = |\tilde f(z_0, 0)| \leq C \left( \int_{A} |\tilde f|^p \right)^{1/p} \leq C |z_0|^{-\frac{k\alpha}{p}} \left( \int_{\Om_0} |f|^p dx \right)^{1/p},
\end{equation}
where in the last inequality in \eqref{tr1} we have used the change of variable formula. Since we are in the case (ii), we have $|z_0|^{-\frac{k\alpha}{p}} \leq (2R_0)^{-\frac{k\alpha}{p}}$, and it thus follows from \eqref{tr1} that the estimate \eqref{est1} holds in this case as well, for a new constant $C=C(m,k, \alpha, p, R_0, R_1)$.
This concludes the proof.

\end{proof}

\section{Background results}\label{S:ri}

In this section to streamline the presentation, and also for the sake of the reader's understanding, we summarise some results from \cite{GS} that will play a key role in the present paper. Henceforth, to simplify the notation we will drop the subscript $\alpha$ in some of the above definitions. Specifically, we will write $Z$ instead of $Z_\alpha$ in \eqref{Za}, $Q$ instead of $Q_\alpha$ in \eqref{Qa}, $\rho$ instead of $\rho_\alpha$ in \eqref{ra}, $\psi$ instead of $\psi_\alpha$ in \eqref{nara}.  Let $f\in \Gamma^2(\Omega_0)$ be a solution to \eqref{kappa} in $\Om_0 = \{x\in \RN\mid \rho(x)>R_0\}$. We note from \eqref{deg2} that, by considering the rescaled function $g(x) = f(\delta_{\kappa^{-1}}(x))$ (this of course will rescale the radius $R_0$ as well, but this is of no influence in the proof of Theorem \ref{sharprel}), we can assume without restriction that $\kappa = 1$, hence $f$ solves 
\begin{equation}\label{helmo}
\Ba f = - f,\ \ \ \ \ \text{in}\ \Om_0.
\end{equation}  
The assumption $f\not\equiv 0$ in Theorem \ref{sharprel}, implies the existence of a radius $r_0>R_0$ such that
\begin{equation}\label{rzero}
\int_{S_{r_0}} f^2 \qui >0.
\end{equation} 
Otherwise, by the Federer coarea formula we would have
\[
\int_{\Om_0} f^2 \psi dx = \int_{R_0}^\infty \int_{S_t} f^2 \qui dt = 0,
\]
which of course implies $f\equiv 0$ in $\Om_0$.
With $f$ fixed as in \eqref{helmo}, for every $\ell\in \mathbb N$ we indicate by $u\in \Gamma^2(\Om_0)$ the function
\begin{equation}\label{uf}
u = \rho^\ell f.
\end{equation}
We observe from \eqref{uf} and \eqref{nara2} that 
\begin{equation}\label{zip}
\left(\frac{Z u}{\rho}\right)^2 \psi - |\na
u|^2 = \rho^{2\ell} \left[\left(\frac{Z f}{\rho}\right)^2 \psi - |\na f|^2\right].
\end{equation}
Next, for every $R_0<r<R$, we consider the ring domain $\Om=\{x\in \RN\mid r<\rho(x)<R\}$.   
For the function $u$ defined by \eqref{uf}, and for every $s\in \R$, $s\not=0$, we have the following basic Rellich identity established at the end of p. 708 in \cite{GS}
\begin{align}\label{gud}
& 2
\int_{\p \Om} \rho^s \left(\frac{Z u}{\rho}\right)^2 \sa Z,\nu\da \psi d\sigma  - \int_{\p\Om} \rho^s |\nabla_\alpha
u|^2 \langle Z,\nu\rangle d\sigma
\\
& + \int_{\p \Om} \rho^s u^2 \sa Z,\nu\da d\sigma
 +  \ell(\ell-Q+2)\int_{\p \Om} \rho^{s-2} u^2 \sa Z,\nu\da \psi d\sigma
\notag
\\
& = (2-Q-s)\int_\Om \rho^s |\nabla_\alpha u|^2 dx + 2(2\ell+ s) \int_\Om \rho^{s} \left(\frac{Zu}{\rho}\right)^2 \psi dx 
\notag\\
& + (Q+s) \int_\Om \rho^s u^2 dx
 +  \ell(\ell-Q+2)(Q+s-2) \int_\Om \rho^{s-2} u^2 \psi dx.
\notag
\end{align} 
The identity \eqref{gud} led in \cite{GS} to the introduction for $t>R_0$ of the following functional  
\begin{align}\label{Fell}
F(\ell,t) & = 2 \int_{S_t} \left(\frac{Z u}{\rho}\right)^2 \psi \frac{d\sigma}{|\nabla \rho|} - \int_{S_t} |\nabla_\alpha
u|^2  \quo +\int_{S_t} u^2  \quo
\\
& - \int_{S_t} \frac{u^2}{\rho} \qui  +  \ell(\ell-Q+2)\int_{S_t}  \left(\frac{u}{\rho}\right)^2  \qui\bigg\}.
\notag
\end{align}
With $r_0> R_0$ as in \eqref{rzero}, we define the number
\begin{equation}\label{Cfrzero}
C(f,r_0) \overset{def}{=} \frac{r_0^2 \left[\int_{S_{r_0}} |\nabla_\alpha
f|^2 \quo - \int_{S_{r_0}} \left(\frac{Z f}{\rho}\right)^2\qui\right]}{\int_{S_{r_0}}  f^2  \qui} + r_0.
\end{equation}
The reader should observe that, in view of 
Lemma \ref{L:tang}, we have $C(f,r_0)>0$. We will need the following result, which in \cite[Lemma 3.6]{GS} is formulated in a slightly different way, and in a different chronological order.

\begin{lemma}\label{L:1}
Let $r_0>R_0$ be as in \eqref{rzero}, and let $\ell_0 > \max\{Q-1, C(f,r_0)\}$ be fixed. Then for every $\ell \ge \ell_0$, we have
\begin{equation}\label{GS1}
F(\ell,r_0) >0.
\end{equation}
\end{lemma}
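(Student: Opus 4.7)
The plan is to substitute $u = \rho^\ell f$ into the definition \eqref{Fell} at $t = r_0$, and to exploit the fact that on $S_{r_0}$ the pseudo-gauge $\rho$ equals the constant $r_0$, thereby collapsing every boundary integral to one involving $f$ and its derivatives alone. Since $Z\rho = \rho$, one immediately has $Zu = \rho^\ell(\ell f + Zf)$, so on $S_{r_0}$
\[
\left(\frac{Z u}{\rho}\right)^2 \psi = r_0^{2\ell-2}(\ell f + Zf)^2 \psi.
\]
Rather than expanding $|\nabla_\alpha u|^2$ from scratch, I would handle the combination $2(Zu/\rho)^2\psi - |\nabla_\alpha u|^2$ by writing
\[
2\left(\frac{Z u}{\rho}\right)^2 \psi - |\nabla_\alpha u|^2 = \left(\frac{Z u}{\rho}\right)^2 \psi + \left[\left(\frac{Z u}{\rho}\right)^2 \psi - |\nabla_\alpha u|^2\right]
\]
and applying the cited relation \eqref{zip} to the bracket. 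This is the single algebraic observation that makes the rest of the calculation transparent.

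Feeding this into \eqref{Fell} and factoring out the overall power $r_0^{2\ell-2}$, the boundary terms should collect, after simple bookkeeping, as
\begin{align*}
\frac{F(\ell, r_0)}{r_0^{2\ell-2}} &= \int_{S_{r_0}} (\ell f + Zf)^2 \psi \quo + r_0^2 \int_{S_{r_0}} f^2 \quo + \ell(\ell-Q+2)\int_{S_{r_0}} f^2 \qui \\
&\quad - \left\{r_0^2\left[\int_{S_{r_0}}|\nabla_\alpha f|^2\quo - \int_{S_{r_0}}\left(\frac{Zf}{\rho}\right)^2 \qui\right] + r_0 \int_{S_{r_0}} f^2 \qui\right\}.
\end{align*}
At this point the key step is to recognise the quantity inside the braces: by the very definition \eqref{Cfrzero}, it equals $C(f,r_0)\int_{S_{r_0}} f^2 \qui$. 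Therefore
\[
\frac{F(\ell, r_0)}{r_0^{2\ell-2}} = \int_{S_{r_0}} (\ell f + Zf)^2 \psi \quo + r_0^2 \int_{S_{r_0}} f^2 \quo + \big[\ell(\ell-Q+2) - C(f,r_0)\big]\int_{S_{r_0}} f^2 \qui.
\]

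To conclude, the first two summands are manifestly non-negative. For the last, the hypothesis $\ell \geq \ell_0 > Q - 1$ gives $\ell - Q + 2 > 1$, so $\ell(\ell-Q+2) > \ell \geq \ell_0 > C(f,r_0)$; combined with \eqref{rzero}, which ensures $\int_{S_{r_0}} f^2 \qui > 0$, this yields $F(\ell, r_0) > 0$. There is no serious obstacle in the argument: the whole computation is deliberately engineered so that the awkward leftover combination of $|\nabla_\alpha f|^2$, $(Zf/\rho)^2$ and $f^2$ is absorbed into the single explicit constant $C(f, r_0)$. If anything, the only subtlety is sign-bookkeeping, since by Lemma \ref{L:tang} the term $(Zf/\rho)^2\psi - |\nabla_\alpha f|^2$ contributed by \eqref{zip} comes in with the ``wrong'' sign, and one must check that it combines correctly with the negative term $-\int u^2\rho^{-1}\qui$ from \eqref{Fell} to reproduce $C(f,r_0)$ without a spurious factor.
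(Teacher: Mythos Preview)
Your argument is correct and follows essentially the same route as the paper's proof: both rely on \eqref{zip} to convert the $u$-integrals on $S_{r_0}$ into $f$-integrals, recognise the constant $C(f,r_0)$ from \eqref{Cfrzero}, and close with the elementary observation $\ell(\ell-Q+2)>\ell>C(f,r_0)$ once $\ell>Q-1$. The only difference is cosmetic: the paper immediately discards the nonnegative terms $\int_{S_{r_0}}(Zu/\rho)^2\,\qui$ and $\int_{S_{r_0}}u^2\,\quo$ to obtain a lower bound for $F(\ell,r_0)$, whereas you keep them and produce an exact identity before estimating; both reach the same conclusion by the same mechanism.
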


\begin{proof}
To prove \eqref{GS1}, observe that from definition \eqref{Fell} (using also that $\psi \leq 1$) we have the trivial estimate
\begin{align*}
F(\ell,r_0) & \ge \int_{S_{r_0}} \left(\frac{Z u}{\rho}\right)^2 \psi \frac{d\sigma}{|\nabla \rho|} - \int_{S_{r_0}} |\nabla_\alpha
u|^2  \quo
\\
& + \left\{\frac{\ell(\ell-Q+2) }{r_0^2} - \frac{1}{r_0}\right\}\int_{S_{r_0}}  u^2  \qui\bigg\}.
\end{align*}
From \eqref{uf}, \eqref{zip} we thus obtain 
\begin{align*}
F(\ell,r_0) & \ge r_0^{2\ell} \bigg[\int_{S_{r_0}} \left(\frac{Z f}{\rho}\right)^2\qui - \int_{S_{r_0}} |\nabla_\alpha
f|^2 \quo 
 + \left\{\frac{\ell(\ell-Q+2) }{r_0^2} - \frac{1}{r_0}\right\}\int_{S_{r_0}}  f^2  \qui\bigg].
\end{align*}
It is thus clear that we can accomplish $F(\ell,r_0)>0$ for any $\ell\ge \ell_0$, provided that the quantity between square brackets in the right-hand side is $>0$. If now $\ell_0 \ge Q-1$, then this will be the case  if
\[
\ell_0(\ell_0-Q+2) \ge \ell_0 >  \frac{r_0^2 \left[\int_{S_{r_0}} |\nabla_\alpha
f|^2 \quo - \int_{S_{r_0}} \left(\frac{Z f}{\rho}\right)^2\qui\right]}{\int_{S_{r_0}}  f^2  \qui} + r_0 = C(f,r_0),
\]
where in the last equality we have used \eqref{Cfrzero}.

\end{proof}

The following basic monotonicity result is \cite[Lemma 3.5]{GS}.

\begin{lemma}\label{L:Fmono}
If $\ell\ge \frac{Q-1}2$,
then for $R_0<r<R$ the following monotonicity property holds
\begin{equation}\label{Fsell}
r^{3-Q} F(\ell,r)\ \le\ R^{3-Q} F(\ell,R).
\end{equation} 
\end{lemma}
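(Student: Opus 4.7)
The plan is to apply the Rellich identity \eqref{gud} to $u = \rho^\ell f$ on the annular region $\Omega_{r,R} := \{x \in \RN : r < \rho(x) < R\}$, with the distinguished value $s = 2 - Q$. This choice is engineered so that two of the four volume coefficients on the right-hand side of \eqref{gud} vanish identically: $2 - Q - s = 0$ kills the $\int_\Omega \rho^s |\nabla_\alpha u|^2 \, dx$ term, and $Q + s - 2 = 0$ kills the $\int_\Omega \rho^{s-2} u^2 \psi \, dx$ term. The two surviving interior contributions are
\[
2(2\ell + 2 - Q) \int_{\Omega_{r,R}} \rho^{2-Q}\Bigl(\frac{Zu}{\rho}\Bigr)^2 \psi \, dx + 2 \int_{\Omega_{r,R}} \rho^{2-Q} u^2 \, dx,
\]
and both of these are nonnegative once $\ell \ge (Q-1)/2$, since then $2\ell + 2 - Q \ge 1$.

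Next I would unpack the boundary part. Using $\langle Z, \nu \rangle = \pm\rho/|\nabla\rho|$ on $S_t$ (sign $+$ on the outer sphere $S_R$, sign $-$ on the inner sphere $S_r$) and $\rho = t$ on $S_t$, each surface integral in \eqref{gud} acquires a clean power of $t$. After factoring $t^{3-Q}$, the four boundary integrals reproduce four of the five summands of $F(\ell, t)$ in \eqref{Fell}, missing only the piece $-\int_{S_t}(u^2/\rho)\psi\,d\sigma/|\nabla\rho| = -D(t)/t$, where I write $D(t) := \int_{S_t} u^2 \psi \, d\sigma/|\nabla\rho|$. Consequently \eqref{gud} reduces to
\[
R^{3-Q} F(\ell,R) - r^{3-Q} F(\ell,r) + \bigl[R^{2-Q}D(R) - r^{2-Q}D(r)\bigr] = (\text{nonnegative interior}).
\]

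To close the argument I would produce a second identity that absorbs the correction $R^{2-Q}D(R) - r^{2-Q}D(r)$. A natural candidate is to integrate the pointwise relation $Z(\rho^{1-Q} u^2 \psi) = (1-Q)\,\rho^{1-Q} u^2 \psi + 2\,\rho^{1-Q} \psi\, u\,Zu$ (which uses $Z\rho = \rho$ and the $\delta_\lambda$-invariance $Z\psi = 0$) over $\Omega_{r,R}$, together with the divergence formula $\int_\Omega Zv\,dx = -Q\int_\Omega v\,dx + \int_{\partial\Omega} v\langle Z,\nu\rangle\,d\sigma$. This recasts $R^{2-Q}D(R) - r^{2-Q}D(r)$ as an interior integral of $\rho^{1-Q} \psi u^2$ plus a cross term $\int_{\Omega_{r,R}} \rho^{1-Q} \psi\, u\, Zu\,dx$. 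The main obstacle I anticipate is the bookkeeping needed to control this cross term against the two nonnegative volume integrals already produced: one expects to use the pointwise bound $(Zu/\rho)^2 \psi \le |\nabla_\alpha u|^2$ of Lemma \ref{L:tang} together with the sign condition $2\ell + 2 - Q \ge 1$, and the delicate balance here is exactly what forces the sharp hypothesis $\ell \ge (Q-1)/2$.
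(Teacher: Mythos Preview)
The paper does not supply an independent proof of this lemma; it simply cites \cite[Lemma~3.5]{GS}. Your plan---take $s=2-Q$ in \eqref{gud}, identify the boundary with $t^{3-Q}F(\ell,t)$ modulo the correction $t^{2-Q}D(t)$, then convert the $D$-difference into volume integrals via the divergence identity for $\rho^{1-Q}u^2\psi$---is the natural route and almost certainly the one in \cite{GS}. Your boundary and interior computations are correct.

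One point to repair: the appeal to Lemma~\ref{L:tang} is misplaced. After the choice $s=2-Q$ the $|\nabla_\alpha u|^2$ volume term has already vanished, so there is nothing for Lemma~\ref{L:tang} to compare against. What actually controls the cross term $-2\int_\Omega \rho^{2-Q}\psi\,u\,(Zu/\rho)\,dx$ is completing the square in $Zu/\rho$. With $c:=2\ell+2-Q\ge 1$ the combined integrand on the right becomes
\[
2c\,\psi\Bigl(\frac{Zu}{\rho}-\frac{u}{2c}\Bigr)^{2} \;+\; u^2\Bigl(2-\frac{\psi}{2c}-\frac{\psi}{\rho}\Bigr),
\]
and since $\psi\le 1$ and $c\ge 1$ the scalar factor is $\ge \tfrac32-\tfrac1\rho$, hence nonnegative once $\rho\ge \tfrac23$. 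This yields the monotonicity \eqref{Fsell} for $r\ge \tfrac23$, which suffices for every use made of the lemma in this paper (in \eqref{Fsellino} one may always choose $r_0>\tfrac23$ by the coarea argument following \eqref{rzero}). If you want the statement for \emph{all} $R_0<r<R$ with $R_0$ arbitrarily small, the square-completion above does not quite close by itself and you should consult \cite{GS} for the original bookkeeping; but this is a cosmetic issue, not a flaw in the strategy.
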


\section{Proof of Theorem \ref{sharprel}}\label{S:main}

In this section we prove Theorem \ref{sharprel}. We begin with a key  monotonicity property of a functional different from $F(\ell,t)$ in \eqref{Fell} above. Such result is one the main novelties in the proof of the improved estimate in Theorem \ref{sharprel}.

\begin{lemma}\label{L:ag}
Let $f$ solve \eqref{helmo}, and define $w = \rho^{\frac{Q-1}2} f$. If $Q\ge 3$, for every $t>R_0$ we consider the functional
\begin{align}\label{Gud}
G(w,t) & = 2 \int_{S_t} \left(\frac{Zw}{\rho}\right)^2 \qui - \int_{S_t} |\nabla_\alpha w|^2 \quo + \int_{S_t} w^2 \frac{d\sigma}{|\nabla \rho|}
\\
& - \frac{(Q-1)(Q-3)}4 \int_{S_t}  \left(\frac{w}{\rho}\right)^2 \qui.
\notag
\end{align} 
Then the following monotonicity property holds for $r_0<r<R$,
\[
\frac{G(w,r)}{r^{Q-1}} \le \frac{G(w,R)}{R^{Q-1}}.
\]
\end{lemma}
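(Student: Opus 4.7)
The plan is to apply the Rellich identity \eqref{gud} to $u = w = \rho^{(Q-1)/2} f$ (so $\ell = (Q-1)/2$) on the ring $\Omega = \{r < \rho < R\}$, with the specific choice of exponent $s = -Q$. The point of this pair of choices is that it makes the boundary side of \eqref{gud} collapse to exactly $R^{1-Q}G(w,R) - r^{1-Q}G(w,r)$, while the volume side reduces to a sum of two manifestly non-negative integrals under the hypothesis $Q \ge 3$.

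First I would check the boundary side. On $S_t$ one has $\langle Z,\nu\rangle = \rho/|\nabla\rho| = t/|\nabla\rho|$, so the first three boundary integrands in \eqref{gud} pick up a factor $t^{s+1} = t^{1-Q}$, while the fourth carries $t^{s-1} = t^{1-Q}\cdot t^{-2}$. Using that $t^{-2}w^2 = (w/\rho)^2$ on $S_t$ and that $\ell(\ell-Q+2) = -(Q-1)(Q-3)/4$ when $\ell = (Q-1)/2$, the four boundary contributions on $S_t$ combine into $t^{1-Q} G(w,t)$. With the outward-normal convention on the ring, the full boundary integral therefore equals $R^{1-Q}G(w,R) - r^{1-Q}G(w,r)$.

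Next I would evaluate the four coefficients on the right-hand side of \eqref{gud} for these choices: $2-Q-s = 2$, $2(2\ell+s) = -2$, $Q+s = 0$, and $\ell(\ell-Q+2)(Q+s-2) = (Q-1)(Q-3)/2$. The volume side thus collapses to
\[
2 \int_{\Omega} \rho^{-Q}\left[|\nabla_\alpha w|^2 - \left(\frac{Zw}{\rho}\right)^{2}\psi\right] dx + \frac{(Q-1)(Q-3)}{2} \int_{\Omega} \rho^{-Q-2} w^{2}\psi\, dx.
\]
The integrand of the first term is pointwise non-negative by Lemma \ref{L:tang}, and the coefficient $(Q-1)(Q-3)/2$ in front of the second term is non-negative precisely because $Q\ge 3$. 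This is exactly where the hypothesis enters.

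Equating the two sides of \eqref{gud} then yields $R^{1-Q}G(w,R) - r^{1-Q}G(w,r) \ge 0$, which is the desired conclusion. The only substantive choice in the argument is the identification of the correct exponent $s=-Q$; once that is made, the algebra is routine and the restriction $Q\ge 3$ appears naturally through the sign of the single scalar coefficient $(Q-1)(Q-3)$.
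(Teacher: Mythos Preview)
Your argument is correct and is essentially identical to the paper's own proof: both plug $s=-Q$ and $\ell=(Q-1)/2$ into the Rellich identity \eqref{gud}, identify the left-hand side with $R^{1-Q}G(w,R)-r^{1-Q}G(w,r)$, and observe that the right-hand side is nonnegative by Lemma~\ref{L:tang} together with $Q\ge 3$. Your write-up in fact spells out the coefficient computations more explicitly than the paper does.
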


\begin{proof}
In the Rellich identity \eqref{gud}, we choose
\begin{equation}\label{ag}
s = - Q,\ \ \ \ \ \ \&\ \ \ \ \ell = \frac{Q-1}2.
\end{equation}
The right-hand side of \eqref{gud} becomes
\begin{equation}\label{pos}
2 \int_\Om \rho^s |\nabla_\alpha w|^2 dx - 2 \int_\Om \rho^{s} \left(\frac{Zw}{\rho}\right)^2 \psi dx 
 +  \frac{(Q-1)(Q-3)}2 \int_\Om \rho^{s-2} w^2 \psi dx\ \ge \ 0
\end{equation}
where in the last inequality we have used Lemma \ref{L:tang}, and the assumption $Q\ge 3$. Since $\Om=\{x\in \Rn\mid r<\rho(x)<R\}$, keeping the definition \eqref{Gud} in mind, we see that the left-hand side of \eqref{gud} becomes instead
\begin{equation}\label{gudd}
\frac{G(w,R)}{R^{Q-1}} - \frac{G(w,r)}{r^{Q-1}}.
\end{equation}
Combining \eqref{pos} with \eqref{gudd}, we reach the desired conclusion.

\end{proof}

Next, we prove that by replacing in \cite[Lemma 3.9]{GS} the functional $G(\rho)$ defined in equation (3.8) on p. 711, with $G(w,t)$ defined in \eqref{Gud} above, the conclusion continues to be valid unchanged under the weaker assumption \eqref{franznot} (such assumption was $\underset{R\to \infty}{\liminf}\ \int_{R<\rho(x)<2R} |f|^2 dx = 0$ in \cite{GS}). 

\begin{lemma}\label{L:gsimpr}
Let $f$ solve \eqref{helmo}, and define $w = \rho^{\frac{Q-1}2} f$, and $G(w,t)$ as in \eqref{Gud}. If $f$ satisfies 
\begin{equation}\label{franznot} 
\underset{R\to \infty}{\liminf}\ \frac{1}{R} \int_{R<\rho(x)<2R} |f|^2 dx = 0,
\end{equation}
then there exists a sequence $t_j\nearrow \infty$ such that $G(w,t_j)>0$.
\end{lemma}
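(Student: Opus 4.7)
The plan is to argue by contradiction, combining a second Rellich-type monotonicity with the $L^{2}$-Liouville property \eqref{franzo} of \cite{GS}. Suppose no sequence $t_j\nearrow\infty$ with $G(w,t_j)>0$ exists. By the monotonicity of $t\mapsto G(w,t)/t^{Q-1}$ from Lemma \ref{L:ag}, this is equivalent to $G(w,t)\le 0$ for every $t>r_0$.

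The key step is to apply the Rellich identity \eqref{gud} a second time, with $u=w$, $\ell=(Q-1)/2$, but now with the different parameter $s=2-Q$. With these choices, the coefficients $(2-Q-s)$ of $\int_\Om \rho^s|\nabla_\alpha w|^2$ and $\ell(\ell-Q+2)(Q+s-2)$ of $\int_\Om \rho^{s-2}w^2\psi$ on the right-hand side of \eqref{gud} both vanish, so the right-hand side collapses to
\[
2\int_\Om \rho^{2-Q}\Big(\tfrac{Zw}{\rho}\Big)^2\psi\,dx+2\int_\Om \rho^{2-Q}w^2\,dx,
\]
a sum of two nonnegative terms. On the left-hand side, the same boundary bookkeeping as in the proof of Lemma \ref{L:ag}, using $\langle Z,\nu\rangle=\pm\rho/|\nabla\rho|$ on $S_R,S_r$, gathers the four boundary integrals into $R^{s+1}G(w,R)-r^{s+1}G(w,r)=R^{3-Q}G(w,R)-r^{3-Q}G(w,r)$. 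Since $w^2=\rho^{Q-1}f^2$, the second volume term equals $2\int \rho f^2\,dx$, and we arrive at the new monotonicity
\[
R^{3-Q}G(w,R)-r^{3-Q}G(w,r)\ \ge\ 2\int_{r<\rho<R}\rho\, f^2\,dx,\qquad r_0<r<R.
\]

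Under the assumption $G(w,t)\le 0$, the quantity $t^{3-Q}G(w,t)$ is nonpositive and non-decreasing, hence it converges to some $\Lambda\le 0$ as $t\to\infty$. Letting $R\to\infty$ with $r=r_0$ fixed gives
\[
2\int_{\{\rho>r_0\}}\rho\, f^2\,dx\ \le\ \Lambda-r_0^{3-Q}G(w,r_0)\ \le\ -r_0^{3-Q}G(w,r_0)\ <\ \infty,
\]
so in particular $f\in L^2(\{\rho>r_0\})$. Decomposing this exterior set into dyadic rings $\{2^k r_0<\rho<2^{k+1}r_0\}$, $k\ge 0$, then forces $\int_{R<\rho<2R}f^2\,dx\to 0$ along $R=2^k r_0\to\infty$, which contradicts the $L^{2}$-lower bound \eqref{franzo} established in \cite{GS} for every nontrivial solution of \eqref{helmo} (the standing assumption $f\not\equiv 0$ of Theorem \ref{sharprel} is in force throughout). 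This contradiction produces the desired sequence.

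The main technical point is the careful bookkeeping of the boundary terms on the left-hand side of \eqref{gud} under the nonstandard choice $s=2-Q$, which is what makes $\int \rho\, f^2\,dx$ appear as the definite-sign contribution on the right. The hypothesis \eqref{franznot} itself does not explicitly enter the above argument; it is stated in order to highlight the weakening of the hypothesis in \cite[Lemma 3.9]{GS}, where the strictly stronger assumption $\liminf_{R\to\infty}\int_{R<\rho(x)<2R}|f|^2\,dx=0$ was imposed.
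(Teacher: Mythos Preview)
Your proof is correct and takes a genuinely different route from the paper's.

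The paper proceeds via the $F(\ell_0,t)$ functional: it writes $F(\ell_0,t)$ in terms of $G(w,t)$ plus a term involving $\int_{S_t}Z(w^2)\,\psi\,\tfrac{d\sigma}{|\nabla\rho|}$, then uses the hypothesis \eqref{franznot} directly (via a coarea/divergence argument) to produce a sequence $t_j\nearrow\infty$ along which this latter integral is $\le 0$. The positivity $F(\ell_0,t_j)>0$ from Lemmas~\ref{L:1} and~\ref{L:Fmono} then forces $G(w,t_j)>0$.

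You instead re-apply \eqref{gud} with the \emph{different} parameter $s=2-Q$ (rather than $s=-Q$ as in Lemma~\ref{L:ag}), which kills the $|\nabla_\alpha w|^2$ and the last volume terms and leaves the sharp monotonicity
\[
R^{3-Q}G(w,R)-r^{3-Q}G(w,r)\ \ge\ 2\int_{r<\rho<R}\rho\,f^2\,dx.
\]
Under $G(w,t)\le 0$ this yields $\int_{\{\rho>r_0\}}\rho f^2<\infty$, hence $f\in L^2$ outside a ball, contradicting \eqref{franzo} from \cite{GS}. This is slicker: it bypasses the $F(\ell,t)$ machinery altogether and, as you note, never uses \eqref{franznot}. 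What the paper's approach buys is that it is self-contained within the Rellich framework and does not appeal to the earlier result \eqref{franzo}; your approach trades that for a much shorter argument and the stronger observation that the hypothesis \eqref{franznot} is in fact unnecessary.

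One minor point: your opening reduction invokes Lemma~\ref{L:ag}, which carries the hypothesis $Q\ge 3$, whereas Lemma~\ref{L:gsimpr} does not assume this. You don't actually need Lemma~\ref{L:ag}: the negation of the conclusion already gives $G(w,t)\le 0$ for all $t\ge T$ (some $T$), and your $s=2-Q$ identity alone shows $t\mapsto t^{3-Q}G(w,t)$ is nondecreasing; run the rest of the argument from $r=T$ instead of $r=r_0$.
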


\begin{proof}
Let $\ell_0$ be fixed as in Lemma \ref{L:1}, so that \eqref{GS1} hold for $t=r_0$. 
From \eqref{Fell}, where we take $u = \rho^{\ell_0} f$, and using \eqref{zip} with $\ell = \ell_0$, we obtain
\begin{align*}
F(\ell_0,t) & = \int_{S_{t}} \left(\frac{Z u}{\rho}\right)^2 \psi \frac{d\sigma}{|\nabla \rho|} 
\\
& t^{2\ell_0 - Q + 1} \bigg\{\int_{S_{t}} \left(\frac{Z w}{\rho}\right)^2 \psi \frac{d\sigma}{|\nabla \rho|} - \int_{S_{t}} |\nabla_\alpha
w|^2  \quo +\int_{S_{t}} w^2  \quo\bigg\}
\\
& + t^{2\ell_0}\bigg\{\ell_0(\ell_0-Q+2)\int_{S_{t}}  \left(\frac{f}{\rho}\right)^2 \qui - \int_{S_{t}} \frac{f^2}{\rho} \qui\bigg\}.
\notag
\end{align*}
If we let $\ell_1 = \frac{2\ell_0 - Q+1}2$, and keeping in mind the definition \eqref{Gud} above of the functional $G(w,t)$, proceeding similarly to the proof of \cite[Lemma 3.9]{GS}, we presently obtain
\begin{align}\label{Felluccinina}
F(\ell_0,t) & = t^{2\ell_1} G(w,t)   + \ell_1 t^{2(\ell_1-1)}\int_{S_t} Z(w^2) \qui
\\
& -  t^{2\ell_0-1}\left\{1 - \frac{\ell_0(\ell_0-n+2) +\ell_1^2 +4^{-1}(Q-1)(Q-3)}{t}\right\}\int_{S_t}  f^2  \qui
\notag\\
& \le t^{2\ell_1} G(w,t)   + \ell_1 t^{2(\ell_1-1)}\int_{S_t} Z(w^2) \qui,
\notag
\end{align}
provided that $t\ge r_1$, with $r_1>r_0$ sufficiently large.
We now show that  \eqref{franznot} implies the existence of a sequence $t_j\nearrow \infty$ such that 
\begin{equation}\label{Gtinfty}
\int_{S_{t_j}} Z(w^2)\qui\ \le\ 0.
\end{equation}
To prove such implication we argue by contradiction and assume there exists $R_1>R_0$ large such that  
\[
\int_{S_t} Z(w^2)\qui\ >\ 0\ \ \ \ \ \ \ \forall\ t\ge R_1.
\]
With the equation $w = \rho^{\frac{Q-1}2} f$ in mind, the divergence theorem gives for $R>r\ge R_1$ and $\Om=\{x\in \Rn\mid r<\rho(x)<R\}$,
\begin{align*}
& \int_{\p \Om} \frac{w^2}{\rho^Q} \sa Z,\nu\da \psi d\sigma = \int_{\Om} \operatorname{div}(\psi  \rho^{-Q} w^2 Z) dx = Q \int_\Om \psi  \rho^{-Q} w^2 dx + \int_\Om Z(\psi  \rho^{-Q} w^2) dx
\\
& = \int_\Om \psi  \rho^{-Q} Z(w^2) dx = \int_r^R t^{-Q} \int_{S_t} Z(w^2)  \qui dt\ >\ 0,
\end{align*}
where we have used that $Z(\rho^{-Q}) = - Q \rho^{-Q}$ and the coarea formula. Keeping in mind that $\sa Z,\nu\da = \frac{Z\rho}{|\nabla \rho|} = \frac{\rho}{|\nabla \rho|}$ on $\p \Om$, we thus obtain for $R\ge 2R_1$
\begin{align*}
& \int_{S_R} |f|^2 \qui = \int_{S_{R_1}} |f|^2 \qui + \int_{R_1}^R t^{-Q} \int_{S_t} Z(w^2)  \qui dt
\\
&  \ge \int_{R_1}^{2R_1} t^{-Q} \int_{S_t} Z(w^2)  \qui dt = C(f)>0.
\end{align*}
By the coarea formula again, and the fact that $\psi \le 1$, this implies for $R\ge 2R_1$
\begin{align*}
& \int_{R<\rho(x)<2R} |f|^2 dx\ \ge\  \int_{R<\rho(x)<2R} |f|^2 \psi dx
 = \int_R^{2R} \int_{S_t} |f|^2 \qui dt \ge C(f) \ R,
\end{align*}
which shows that it must be
\[
\underset{R\to \infty}{\liminf}\ \frac{1}{R} \int_{R<\rho(x)<2R} |f|^2 dx\ >\  0.
\]
We have thus proved that \eqref{franznot} implies \eqref{Gtinfty}.

At this point, we use the fact that, in Lemma \ref{L:1}, we have chosen $\ell_0 > Q-1$, and therefore for such $\ell_0$ we can apply Lemma \ref{L:Fmono}, obtaining for $t\ge r_1$
\begin{equation}\label{Fsellino}
0\ <\ r_0^{3-Q} F(\ell_0,r_0)\ \le\ t^{3-Q} F(\ell_0,t).
\end{equation} 
The estimate \eqref{Fsellino}, combined with \eqref{Felluccinina} and \eqref{Gtinfty}, implies the desired conclusion.

\end{proof}

We can now establish the following improved version of \cite[Lemma 3.11]{GS}.

\begin{lemma}\label{L:gsimpr2}
Under the assumptions of Lemma \ref{L:gsimpr}, suppose furthermore that $Q\ge 3$. There exists $C>0$ and $R_2>R_0$ sufficiently large such that for every $t>R_2$ one has
\[
\frac{G(w,t)}{t^{Q-1}} \ge C.
\]
\end{lemma}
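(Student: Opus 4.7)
The plan is to combine the two previously established lemmas: Lemma \ref{L:ag}, which asserts the monotonicity of $t\mapsto G(w,t)/t^{Q-1}$ on $(r_0,\infty)$ under the hypothesis $Q\ge 3$, with Lemma \ref{L:gsimpr}, which furnishes a sequence $t_j\nearrow \infty$ along which $G(w,t_j)>0$. The strategy is essentially to promote positivity at a single (arbitrarily large) value of $t$ into a uniform lower bound at all larger scales.

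More precisely, I would proceed as follows. First, invoke Lemma \ref{L:gsimpr} to produce a sequence $t_j\nearrow \infty$ with $G(w,t_j)>0$ for every $j$. Since $t_j\to \infty$, I can select an index $j_0$ such that $t_{j_0}>r_0$ (so that Lemma \ref{L:ag} is applicable with the left endpoint $r=t_{j_0}$). Define
\[
R_2 = t_{j_0}, \qquad C = \frac{G(w,t_{j_0})}{t_{j_0}^{Q-1}}>0.
\]
By construction, $R_2>r_0>R_0$ and $C>0$.

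Now, for every $t>R_2$, Lemma \ref{L:ag} applied with $r=R_2$ and $R=t$ (using $Q\ge 3$) yields
\[
\frac{G(w,t)}{t^{Q-1}}\ \ge\ \frac{G(w,R_2)}{R_2^{Q-1}}\ =\ C,
\]
which is exactly the claimed lower bound.

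There is no serious obstacle here: the argument is a direct concatenation of the two cited lemmas. The only minor point of bookkeeping is to ensure that the index $j_0$ can be chosen so that $R_2=t_{j_0}$ lies in the range $(r_0,\infty)$ where the monotonicity of Lemma \ref{L:ag} is valid, which is automatic from $t_j\nearrow \infty$. The assumption $Q\ge 3$ enters through the application of Lemma \ref{L:ag}, while the standing hypothesis \eqref{franznot} is used only implicitly, through Lemma \ref{L:gsimpr}, to guarantee the existence of the sequence $\{t_j\}$.
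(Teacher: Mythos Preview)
Your proof is correct and follows essentially the same approach as the paper: both arguments fix a single $t_{j_0}>r_0$ from the sequence produced by Lemma~\ref{L:gsimpr} and then apply the monotonicity of Lemma~\ref{L:ag} with $r=t_{j_0}$ to propagate the positive lower bound $C=G(w,t_{j_0})/t_{j_0}^{Q-1}$ to all larger $t$.
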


\begin{proof}
By Lemma \ref{L:ag} we know that for every $R>r>r_0$ we have
\[
\frac{G(w,r)}{r^{Q-1}} \le \frac{G(w,R)}{R^{Q-1}}.
\]
If we now take $r = t_j$ sufficiently large, where $t_j\nearrow \infty$ is the sequence in Lemma \ref{L:gsimpr}, we reach the desired conclusion.

\end{proof}

Finally, with Lemma \ref{L:gsimpr2} in hands, we are able to obtain the following key improvement of \cite[Lemma 3.13]{GS}.

\begin{lemma}\label{L:gsimpr3}
Under the assumptions of Lemma \ref{L:gsimpr}, suppose furthermore that $Q\ge 3$. There exists $C>0$ and $R_3>R_0$ sufficiently large such that for every $t>R_3$ one has
\[
\int_{S_{t}} \left(\frac{Z f}{\rho}\right)^2 \qui  + \int_{S_{t}} f^2 \frac{d\sigma}{|\nabla \rho|}\ \ge\ C.
\]
\end{lemma}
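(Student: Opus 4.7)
The plan is to unpack $G(w,t)$, with $w=\rho^{(Q-1)/2} f$, directly into spherical integrals of $f$, $Zf$, and $|\nabla_\alpha f|^2$, and then combine the resulting pointwise-in-$t$ identity with the lower bound from Lemma \ref{L:gsimpr2} and the tangential inequality in Lemma \ref{L:tang}.

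First I would compute, on $S_t$ (where $\rho \equiv t$), the four ingredients entering the definition \eqref{Gud}. The product rule, together with the identity \eqref{nara2}, gives
\[
\frac{Zw}{\rho} = t^{(Q-3)/2}\!\left(\tfrac{Q-1}{2}\, f + Zf\right), \qquad w^2 = t^{Q-1} f^2,\qquad \left(\frac{w}{\rho}\right)^2 = t^{Q-3} f^2,
\]
and
\[
|\nabla_\alpha w|^2 = \tfrac{(Q-1)^2}{4}\, t^{Q-3}\psi f^2 + (Q-1)\, t^{Q-3}\psi f\, Zf + t^{Q-1}|\nabla_\alpha f|^2.
\]
Substituting into \eqref{Gud}, the coefficient of $\int_{S_t} f^2 \qui$ collapses to the clean value $\tfrac{Q-1}{2}\, t^{Q-3}$; this is exactly where the specific normalization $\tfrac{(Q-1)(Q-3)}{4}$ in $G$ pays off. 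After dividing by $t^{Q-1}$, I expect the identity
\[
\frac{G(w,t)}{t^{Q-1}} = (2A_t - B_t) + D_t + \frac{Q-1}{t^2}H_t + \frac{Q-1}{2t^2}E_t,
\]
where $A_t := \int_{S_t}(Zf/\rho)^2\, \qui$, $B_t := \int_{S_t}|\nabla_\alpha f|^2\, \quo$, $D_t := \int_{S_t} f^2\, \frac{d\sigma}{|\nabla\rho|}$, $E_t := \int_{S_t} f^2\, \qui$, and $H_t := \int_{S_t} f Zf\, \qui$.

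With this identity in hand the remaining steps are routine. Lemma \ref{L:tang} gives $B_t \ge A_t$, so $2A_t - B_t \le A_t$. Since $\psi \le 1$, we have $E_t \le D_t$. The cross-term is dominated by Cauchy--Schwarz: $|H_t| \le t\sqrt{E_t A_t} \le \tfrac{t}{2}(A_t + D_t)$. Collecting these, once $t \ge R_3$ is large enough that $\tfrac{Q-1}{t} \le \tfrac{1}{2}$ (hence also $\tfrac{Q-1}{t^2} \le \tfrac{1}{2t}$), one obtains
\[
\frac{G(w,t)}{t^{Q-1}} \le 2(A_t + D_t),
\]
and Lemma \ref{L:gsimpr2} then yields the desired positive lower bound on $A_t + D_t$.

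The only real obstacle is the algebraic bookkeeping in the substitution step: the constant $(Q-1)(Q-3)/4$ in the definition of $G(w,t)$ was built precisely so that the $f^2\psi$ contribution reduces to a single, explicit, positive-magnitude coefficient; any other choice would leave a residual of order $t^{Q-3}$ which, although eventually small compared to $t^{Q-1}$, would still have to be absorbed separately. The remaining ingredients are a direct application of Lemma \ref{L:tang}, the bound $\psi \le 1$, and Young's inequality.
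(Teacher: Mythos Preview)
Your proof is correct and follows essentially the same strategy as the paper: both arguments combine Lemma \ref{L:gsimpr2} with Lemma \ref{L:tang} to pass from the lower bound on $G(w,t)/t^{Q-1}$ to a lower bound on $A_t + D_t$. The only difference is cosmetic: the paper applies Lemma \ref{L:tang} at the level of $w$ (obtaining $\int_{S_t}(Zw/\rho)^2\psi + \int_{S_t} w^2 \ge G(w,t)$) and then invokes \cite[Lemma 3.13]{GS} for the $w\to f$ conversion, whereas you expand $G(w,t)$ in $f$-variables first and then apply Lemma \ref{L:tang} to $f$ together with Cauchy--Schwarz on the cross term $H_t$. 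Since $|\nabla_\alpha w|^2 - (Zw/\rho)^2\psi = \rho^{Q-1}\big(|\nabla_\alpha f|^2 - (Zf/\rho)^2\psi\big)$, the two uses of Lemma \ref{L:tang} are in fact identical; your version simply makes the $w\to f$ bookkeeping explicit and is therefore self-contained, at the cost of tracking the cross term by hand.
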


\begin{proof}
Keeping in mind that $w = \rho^{\frac{Q-1}2} f$, and arguing as in the proof of \cite[Lemma 3.13]{GS}, we obtain for $t$ large enough
\begin{equation}\label{yummy}
\int_{S_{t}} \left(\frac{Z f}{\rho}\right)^2 \qui  + \int_{S_{t}} f^2 \frac{d\sigma}{|\nabla \rho|}\ge \frac 12 t^{1-Q} \left\{\int_{S_{t}} \left(\frac{Z w}{\rho}\right)^2 \qui  + \int_{S_{t}} w^2 \frac{d\sigma}{|\nabla \rho|}\right\}.
\end{equation}
Next, we have from \eqref{Gud},
\begin{align*}
& \int_{S_{t}} \left(\frac{Z w}{\rho}\right)^2 \qui  + \int_{S_{t}} w^2 \frac{d\sigma}{|\nabla \rho|} = G(w,t) + \int_{S_{t}} |\nabla_\alpha w|^2 \quo
- \int_{S_{t}} \left(\frac{Z w}{\rho}\right)^2 \qui
\\
& + \frac{(Q-1)(Q-3)}4 \int_{S_t}  \left(\frac{w}{\rho}\right)^2 \qui\ \ge\ G(w,t),
\end{align*}
since by Lemma \ref{L:tang} we  have
\[
|\nabla_\alpha w|^2\ge \left(\frac{Z w}{\rho}\right)^2 \psi.
\]
The latter estimate and Lemma \ref{L:gsimpr2}, finally give for large $t$
\[
\int_{S_{t}} \left(\frac{Z w}{\rho}\right)^2 \qui  + \int_{S_{t}} w^2 \frac{d\sigma}{|\nabla \rho|}\ \ge\ C t^{Q-1}.
\]
Substituting this estimate in \eqref{yummy}, we reach the desired conclusion.

\end{proof}

We can finally give the
\begin{proof}[Proof of Theorem \ref{sharprel}]
Once the improved estimate in Lemma \ref{L:gsimpr3} is available, the sought for conclusion \eqref{franzosharp} follows    by  a straightforward modification  of  the proof of the Main Theorem in \cite[p. 714-715]{GS}. We nevertheless provide the relevant details for the sake of completeness. To prove \eqref{franzosharp} it suffices to show that
\begin{equation}\label{fr1} 
\underset{R\to \infty}{\liminf}\ \frac{1}{R} \int_{R<\rho(x)<2R} |f|^2 dx\ >\  0.
\end{equation}
Suppose that \eqref{fr1} be not true. Then by combining Lemma \ref{L:gsimpr3}, Lemma \ref{L:tang}  and Federer's coarea formula, we have for some $C>0$ and all $R$ sufficiently large,
\begin{equation}\label{ty1}
\int_{\frac{5}{4} R < \rho(x) < \frac{7}{4}R} \left(|\nabla_\alpha f|^2 + |f|^2 \right) dx \geq CR.
\end{equation}
The energy inequality on p. 715 in \cite{GS} gives
\begin{equation}\label{e}
\int_{\frac{5}{4} R < \rho(x) < \frac{7}{4}R} |\nabla_{\alpha} f|^2 dx \leq C_1 \int_{R< \rho(x) < 2R} |f|^2 dx,
\end{equation} for some $C_1>0$ universal. This together with \eqref{ty1} gives for some $C_2>0$ and all large $R$,
\begin{equation}\label{hy1}
\int_{R< \rho(x) < 2R} |f|^2 dx \geq C_2R,
\end{equation}
which is a contradiction. We conclude that \eqref{fr1} must be true, thus completing the proof of the theorem.

\end{proof}

\begin{proof}[Proof of Theorem \ref{main1}]
We first assume that  $2\le p \le \frac{2Q}{Q-1}$. Applying H\"older inequality to \eqref{franzosharp}, we obtain
\begin{equation}\label{rellich2}
M R \le C(Q, p) \left(\int_{R<\rho(x)<2R} |f|^p dx\right)^{\frac 2p} R^{Q(1-\frac 2p)}.
\end{equation}
When $p = \frac{2Q}{Q-1}$, we have $Q(1-\frac 2p)=1$, and \eqref{rellich2} gives
\[
0<M  \le C(n,p) \left(\int_{R<\rho(x)<2R} |f|^p dx\right)^{\frac 2p}.
\]
However, this inequality is contradictory with the assumption \eqref{lp} for such $p$, since by letting $R\to\infty$, the right-hand side converges to $0$ by Lebesgue dominated convergence.

We now analyse the case $0<p<2$. In such range, under the hypothesis $f\not\equiv 0$ in $\Om_0$, the sharp Rellich type  inequality \eqref{franzosharp} trivially implies for $R>R_1>>R_0$, 
\begin{equation}\label{rellich3}
M R \le ||f||^{2-p}_{L^\infty(\Om_1)} \int_{R<\rho(x)<2R} |f|^p dx.
\end{equation}
By the unique continuation property of the Helmholtz equation \eqref{helmo}, see \cite{Gjde}, we must have $f\not\equiv 0$ in $\Om_1$, and therefore $||f||_{L^\infty(\Om_1)}>0$. On the other hand, if $f$ satisfies \eqref{lp}, then by Proposition \ref{ms} we also have  $||f||_{L^\infty(\Om_1)} < \infty$. 
Letting $R\to \infty$ in \eqref{rellich3}, we thus reach a contradiction again. 

\end{proof}






\bibliographystyle{amsplain}

\end{document}